\theoremstyle{plain}
	\newtheorem{theorem}{Theorem}[section]
	\newtheorem*{theorem*}{Theorem}
	\newtheorem{lemma}{Lemma}[section]
	\newtheorem{proposition}{Proposition}[section]
	\newtheorem*{proposition*}{Proposition}
	\numberwithin{equation}{section}
\theoremstyle{remark}
	\newtheorem{remark}{\textbf{Remark}}[]
\theoremstyle{definition}
	\newtheorem{definition}{Definition}[]
\newcommand{\eg}{\emph{e.g. }}
\newcommand{\SSS}{\mathbb{S}}
\author{ \textsc{Thomas Rey}\thanks{Universit\'e de Lyon, Universit\'e Claude Bernard Lyon 1, Institut Camille Jordan, 43 bd. du 11 Novembre 1918, 69622 Villeurbanne cedex, France (rey@math.univ-lyon1.fr).}}
\title{Blow Up Analysis for Anomalous Granular Gases\thanks{The author would like to express his gratitude for the support of the European Research Council ERC Starting Grant 2009, project 239983-NuSiKiMo;}}
\date{Final version}
\begin{document}	
	\pagestyle{myheadings}
	\thispagestyle{plain}
	\markboth{\textsc{Thomas Rey}}{\textsc{Anomalous Granular Gases}}
	
	\maketitle

	\begin{abstract}
		We investigate in this article the long-time behaviour of the solutions to the energy-dependant, spatially-homogeneous, inelastic Boltzmann equation for hard spheres. This model describes a diluted gas composed of hard spheres under statistical description, that dissipates energy during collisions.
		We assume that the gas is  ``anomalous'', in the sense that energy dissipation increases when temperature decreases. This allows the gas to cool down in finite time. 
		We study existence and uniqueness of blow up profiles for this model, together with the trend to equilibrium and the cooling law associated, generalizing the classical Haff's Law for granular gases. 
		To this end, we investigate the asymptotic behaviour of the inelastic Boltzmann equation with and without drift term by introducing new strongly ``nonlinear'' self-similar variables.
	\end{abstract}

	\section{Introduction}
		
			We are interested in this paper in a particular model of granular gases. 
			A granular gas is a set of particles which interact by energy dissipating contact interactions. 
			This is a quite different model than perfect molecular gas where energy is conserved, because the particles are ``microscopic'' regarding the scale of the system, but ``macroscopic'' in the sense that there are not molecules at all, but rather grains of a given size. 
			For example, a suspension of pollen in a fluid or a planetary ring~\cite{Kawai:1990,Araki:1986} can be seen as a granular gas when intermolecular forces are neglected, each one in a different scale. 
			
			Our goal is to investigate the global behaviour of the kinetic energy of a space-homogeneous gas of inelastic hard spheres interacting \emph{via} binary collisions (that is a granular gas), and existence and uniqueness of blow up profiles associated. 
			The study of this kind of system started with the physics paper \cite{haff:1983} of P.K. Haff, and has since generated a large increase of interest, both in Physics (a complete introduction of the subject can be found in the textbook \cite{brilliantov:2004} by N. Brilliantov and T. P\"oschel) and Mathematics (\emph{cf.} the review article \cite{Villani:2006} by C. Villani). 
			It involves some complicated phenomena, such as kinetic \emph{collapse} (\emph{cf.} K. Shida \cite{Shida:1989}) where the gas freezes completely, \emph{clustering} at hydrodynamic level for inhomogeneous gases, or even spontaneous loss of homogeneity (as proved by I. Goldhirsch and G. Zanetti in~\cite{Goldhirsch:1993}). 
			This article will especially deal with the  case of a gas whose particle's collision rate increases with dissipation of energy. 
			Such a gas is sometimes called \emph{anomalous} (see article \cite{Mischler:20061} of S. Mischler and C. Mouhot), because of the unusual behaviour of this collision rate. 
			Another model of anomalous gas has been introduced by G. Toscani in~\cite{Toscani:2000} by assuming that the collisions are close to be elastic for large relative velocities of the grains.
			It has been studied precisely by H. Li and G. Toscani in~\cite{Li:2004}, showing the finite time extinction of the support.
			
			A granular gas can be described in a purely Newtonian way, but the number of macroscopic particles involved (ranging from $10^6$ to $10^{10}$) leads to adopt a statistical physics' point of view. Therefore, we shall study the so-called space-homogeneous \emph{inelastic Boltzmann} equation (also known as \emph{granular gases} equation), given by
			\begin{equation}
				\frac{\partial f}{\partial t} = Q_e(f, f) \label{homo},
			\end{equation}
			where $f = f(t,v)$ represents the particle's distribution function, depending on time $t \geq 0$ and velocity $v \in \mathbb{R}^d$. 
			It is always a nonnegative function.
			The collision operator $Q_e(f, g)$, which will be made precise in the following (particularly concerning the meaning of subscript $e$), models a binary inelastic collision process of hard spheres type, localized in time. This equation will be supplemented with the initial value
			\begin{equation} \label{CI}
				f(0, \cdot) = f_{in}.
			\end{equation}
			
			\begin{figure}
				\begin{center} 
					\psset{xunit=1.0cm, yunit=1.0cm, dotstyle=o, dotsize=3pt 0, linewidth=0.8pt, arrowsize=3pt 2, arrowinset=0.25}
					\begin{pspicture*}(-3.36,-4.54)(8.26,4.95)
						\pscircle[linewidth=1.4pt](0,0){2}
						\pscircle[linewidth=1.4pt](4,0){2}
						\psline[linewidth=1.6pt]{->}(3,0)(1,0)
						\psline[linewidth=1.4pt]{->}(8,3)(5.16,1.63)
						\psline[linewidth=1.4pt]{->}(-3,-4)(-1.37,-1.45)
						\psline[linewidth=1.4pt,linestyle=dashed,dash=4pt]{->}(5.16,-1.63)(8,-3)
						\psline[linewidth=1.4pt,linestyle=dashed,dash=4pt]{->}(-1.37,1.45)(-3,4)
						\psline[linewidth=1.4pt]{->}(-1.37,1.45)(-1.93,3.74)
						\psline[linewidth=1.4pt]{->}(5.16,-1.63)(7.01,-3.36)
						\rput[bl](2.43,0.24){$\omega$}
						\rput[bl](6.79,2.19){$v$}
						\rput[bl](-1.99,-2.88){$v_*$}
						\rput[bl](-1.48,2.59){$v'_*$}
						\rput[bl](5.68,-2.83){$v'$}
					\end{pspicture*} 
				\end{center}
				\label{figCol}
				\caption{Geometry of collision (dashed lines are elastic and solid lines inelastic)}
			\end{figure}
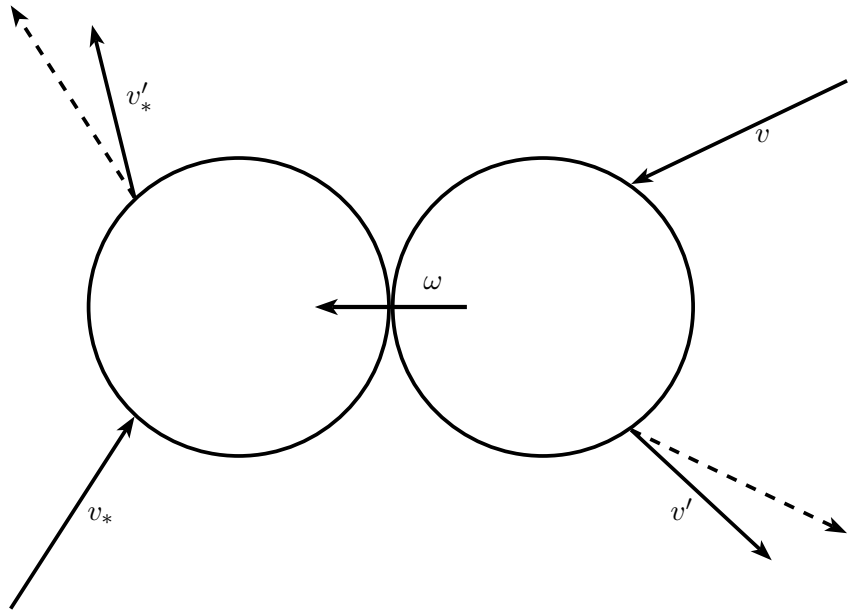
						
			The inelasticity is characterized by a collision mechanics where mass and momentum are conserved and kinetic energy is dissipated. At microscopic level, it can be described as follows: given two particles of pre-collisional velocities $v$ and $v_*$, their respective post-collisional velocities, denoted by $v'$ and $v_*'$, are given by 
			\begin{equation*}
				\left \{\begin{aligned} 
					v' & = v - \frac{1 + e}{2} \left( u \cdot \omega \right) \omega, \\
					v_*' & = v_* + \frac{1 + e}{2} \left( u \cdot \omega \right) \omega,
				\end{aligned}\right .
			\end{equation*}
			where $u := v - v_*$  is the \emph{relative velocity} of a pair of particles, $ \omega$ is the \emph{impact direction} and $e \in [0,1]$ the dissipation parameter, known as \emph{restitution coefficient} (see Figure \ref{figCol} for a sketch of the collision process).
			
			Physically, it means that energy is dissipated in the impact direction only. The parameter $e$ can depend on relative velocity and kinetic energy of the particles. For the need of the analysis, we will simply assume that $e$ is a constant, which is a rather good physical approximation (a complete discussion on this topic can be found in \cite{brilliantov:2004}). Besides,  $e$ will also be taken different from 1, since $e=1$ concerns the classical elastic case, in which no dissipation occurs. 
			The inelastic case $e < 1$ can also be characterized by the fact that the impact direction $\omega$ does not bisect the angle between pre and post-collisional velocities. 
			
			The parametrization of post-collisional velocities can also be found by using some properties of the model. Indeed, it is equivalent to conservation of impulsion and dissipation of energy:
			\begin{align*}
				v' + v_*' & = v + v_*, \\
				|v'|^2 + |v_*'|^2 - |v|^2 - |v_*|^2 & = - \frac{1-e^2}{2} | u \cdot \omega |^2 \leq 0.
			\end{align*}
			
			One of our goal will be the study of the macroscopic dissipation of energy, sometimes referred to as the cooling process of the gas. 
			This is of high interest when one wants to study the self-similar behaviour of dissipative kinetic equations.
			
			This phenomenon has been investigated mathematically by S. Mischler and C. Mouhot in \cite{Mischler:20062} for the case of constant restitution coefficient $e$ and by R. Alonso and B. Lods in \cite{alonso:2009} for visco-elastics hard spheres, more realistic at the physical level. 
			In the latter one, energy dissipation is a power law of the relative velocity of colliding particles. 
			We will use ideas of these two articles to write some of the results we present in the following.
			The paper \cite{Li:2004} also deals with the cooling process of a one dimensional nonlinear friction equation, which arises when dealing with quasi-elastic limit of equation \eqref{homo} with variable restitution coefficient.
			Finally, J.A. Carrillo, M. Di Francesco and G. Toscani in~\cite{Carrillo:2006Self} and J.A. Carrillo and J.L. V\'azquez in \cite{Carrillo:2007a} used the behaviour of the kinetic energy to investigate intermediate asymptotics  of nonlinear diffusion equation.
		
			Let us now describe precisely the collision operator we consider throughout the rest of this article.
			Let $f$ and $g$ be two nonnegative particles distribution functions only depending on $v \in \mathbb{R}^d$. 
			The collision operator $Q_e(f,g)$, where $e \in [0,1)$ is the constant restitution coefficient, can be expressed in the following weak form: given a regular test function $\psi$,
			\begin{equation} \label{Qweak}
				\langle Q_e(f,g), \psi \rangle := \frac{1}{2 }\int_{\mathbb{R}^d \times \mathbb{R}^d \times \mathbb{S}^{d-1}} f_{*} \, g \, \left(\psi' + \psi_*' - \psi - \psi_* \right) B\left( |u|, \widehat u \cdot \omega, \mathcal{E}(f) \right) d\omega \, dv \, dv_*,
			\end{equation}
			where we have used the usual shorthand notation $\psi' := \psi(v')$, $\psi_*' := \psi(v_*')$, $\widehat u := u/|u|$ and 
			\begin{equation*}
				\mathcal{E} \left( f \right) := \int_{\mathbb{R}^d}|v|^2 f(v) dv 
			\end{equation*}
			denotes the \emph{kinetic energy} of $f$. Moreover, $B$ is a positive function known as the \emph{collision kernel}. 
			We will  assume that it factorizes as:
			\begin{equation} \label{ba}
				B\left (|u|, \widehat u \cdot \omega, \mathcal{E}\right ) = |u| \, b\,( \widehat u \cdot \omega) \, \mathcal{E}^{-a},
			\end{equation}
			for a nonnegative constant $a$ and a nonnegative function $b$ (the collisional \emph{cross section}) of mass $1$ in the unit sphere, bounded by below and above by two positive constants $\beta_1$ and $\beta_2$:
			\begin{equation}\label{HS}
				\forall x \in [-1, 1], \ 0 < \beta_1 \leq b(x) \leq \beta_2 < \infty.
			\end{equation}
			
			The gas is anomalous thanks to this collision kernel.
			Indeed, according to \eqref{ba}, the collision frequency will increase when the kinetic energy decreases.
			As we will see in the following, this kinetic effect will leads to the cooling in finite time, provided that the coefficient $a$ is big enough.
			Such a phenomenon is related to the one described in \cite{Toscani:2000,Li:2004}: the gas is anomalous because the grains are close to be elastic for large relative velocities.
			This is in opposition with the more physically intuitive models where collisions are elastic for small relative velocities (for example viscoelastic hard spheres, as described in~\cite{brilliantov:2004, alonso:2009} or even the simplified threshold model of T. P\"oschel, N.V. Brilliantov and T. Schwager of~\cite{poschel:2005transient}).
			It also yields a finite time cooling.
			
			Concerning this topic, we can also mention some recent works of I. Fouxon \emph{et al.}~\cite{Fouxon:2007a} and  Kolvin \emph{et al.}~\cite{Kolvin:2010}. 
			Under a weak inelasticity assumption with constant restitution coefficient, these authors derived granular hydrodynamics (of Euler and Navier-Stokes type) and proved the finite time cooling in these models, for a one parameter family of initial conditions.
			To do so, they used a Lagrangian formulation to exhibit non self-similar solution to these equations, and we shall not consider this approach in our paper.

			We can also give a strong form of the collision operator.
			As it has been pointed out in \cite{Villani:2006} (the full derivation of this expression can be found \eg in articles \cite{bobylev:2000,bobylev:2001} of A. Bobylev, J. Carrillo and I. Gamba), one can write
			\begin{align*}
				Q_e(f, g)(v) & = \mathcal{E}(f)^{-a} \int_{\mathbb{R}^d \times \mathbb{S}^{d-1}} |u| \left( J \frac{|'u|}{|u|} \ 'f \ 'g_* - f g_* \right) b(\widehat u \cdot \sigma) \, d \sigma \, dv_* \\
				 & = Q_e^+(f,g)(v) - f(v) \,L(g)(v).
			\end{align*}
			In this expression, $\, 'v$ and $\, 'v_*$ are the pre-collisional velocities of two particles of given velocities $v$ and $v_*$, defined for $\sigma \in \SSS^{d-1}$ as
			\begin{equation*}
				\left\{\begin{aligned} 
					& 'v = \frac{v + v_*}{2} - \frac{1 - e}{4e} (v-v_*) + \frac{1+e}{4e} |v - v_*| \sigma \\
					& 'v_* = \frac{v + v_*}{2} + \frac{1 - e}{4e} (v-v_*) - \frac{1+e}{4e} |v - v_*| \sigma.
				\end{aligned} \right.
			\end{equation*}
			The parameter $\sigma$ is no longer the impact direction, but the center of what we can call the \emph{collision sphere}~\cite{Villani:2006}.
			The operator $Q_e^+(f,g)(v)$ is usually known as \emph{gain} term because it can be seen as the number of particles of velocity $v$ created by collisions of particles of pre-collisional velocities $\, 'v$ and $\, 'v_*$, whereas $f(v)\,L(g)(v)$ is the \emph{loss} term, modeling the loss of particles of pre-collisional velocities $v$. 
			Finally, the coefficient $J$ is the Jacobian of the map $(v, v_*) \to (\, 'v,\, 'v_*)$, not identically equal to 1, but since $e$ is a nonzero constant given by \[ J = \frac{|u|}{e^2 |'u|}.\]
						
			Taking successively $\psi(v) = 1, \, v$ and $|v|^2$ in \eqref{Qweak} gives macroscopic conservation of mass and momentum, and dissipation of kinetic energy. This implies that equilibria of this collision operator are Dirac distributions $\rho \, \delta_{v=v_0}$ of prescribed mass $\rho$ and momentum $\rho \, v_0$. In order to prevent apparition of these trivial solutions, one can add a diffusive term to give an input of energy, as was studied for example by Cercignani, R. Illner and C. Stoica in \cite{cercignani:2001} and by A. Bobylev and Cercignani in \cite{bobylev:2002diff} for Maxwell molecules.
			It is also possible to look for self-similar solutions to this equation thanks to a rescaling which leads to study the inelastic Boltzmann equation with drift term, as was done by  M. Ernst and R. Brito in \cite{ernst:2002}.
			
			Let us now describe more precisely the asymptotic behaviour of the energy of $f$, assuming that this distribution is time dependant, of mass $1$ and zero momentum. 
			
			\begin{definition}
				If $f$ is solution to \eqref{homo}, we call \emph{cooling process} the asymptotic behaviour of $\mathcal{E}(f)(t)$. We say that there is a \emph{blow up} when $\mathcal{E}(f)(t) \to 0$ for $t \to T_c$, where $T_c$ is the time of explosion of $f$.
			\end{definition}
			
			If one multiplies equation \eqref{homo} by $|v|^2$ and integrates in velocity, using the weak formulation \eqref{Qweak} of the collision operator and \eqref{ba}, one gets dissipation of kinetic energy:
			\begin{equation} \label{edoE1}
				\frac{d}{dt} \mathcal{E}(f)(t) = - \mathcal{E}(f)(t)^{-a} \iint_{\mathbb{R}^d \times \mathbb{R}^d}f(t,v) f(t,v_*) |u| D \left( |u| \right) dv \, dv_*,
			\end{equation}
			where $D$ is a nonnegative quantity usually called \emph{dissipation rate}, given by
			\begin{equation*}
				D(|u|) = \frac{1-e^2}{4} \int_{\mathbb{S}^{d-1}} |u \cdot \omega|^{2} \, b(\widehat u \cdot \omega) \, d\omega.
			\end{equation*}
			Using the  polar coordinates $\cos (\theta) =  \widehat u \cdot \omega$, we have
			\begin{equation} \label{eqDu}
				D(|u|) = \frac{1-e^2}{4} |u|^2 \left | \mathbb S^{d-2} \right | \int_{0}^{\pi} \cos^2(\theta) \sin^{d-3}(\theta) \, b(\cos(\theta)) \, d\theta.
			\end{equation}
			
			Provided that $f$ is of mass $1$ and zero momentum, one can then apply Jensen's inequality to $\Psi(v)= |v|^3$ and to the probability measure $f(v_*)\, dv_*$, together with conservative properties of equation \eqref{homo} to get
			\begin{equation*}
				\int_{\mathbb{R}^d}f(v_*) \Psi(|v-v_*|) dv_* \geq \Psi \left( \left| v \int_{\mathbb{R}^d}f(v_*) dv_* - \int_{\mathbb{R}^d} v_* f(v_*) dv_* \right| \right) = \Psi(|v|).
			\end{equation*}
			Using equalities \eqref{edoE1} and \eqref{eqDu}, it comes that
			\begin{equation} \label{ineqEtmp1}
				\frac{d}{dt} \mathcal{E}(f)(t) \leq - \beta_e \, \mathcal{E}(f)(t)^{-a} \int_{\mathbb{R}^d} f(t,v) |v|^{3} dv,
			\end{equation}
			where $\beta_e$ is given by 
			\begin{equation*}
				\beta_e = \frac{1-e^2}{4} \left | \mathbb S^{d-2} \right|  \int_{0}^{\pi} \cos^2(\theta) \sin^{d-3}(\theta) \,b(\cos(\theta)) \, d\theta < +\infty.
			\end{equation*}			
			Moreover, applying Hölder inequality with $s = 3/2$ and its conjugate $s' = 3$ to the map $v \mapsto f(v) |v|^2$, one gets thanks to mass conservation
			\begin{equation} \label{ineqEtmp2}
				\left( \int_{\mathbb{R}^d} f(v) |v|^2 dv \right)^{\frac{3}{2}} \leq \int_{\mathbb{R}^d} f(v) |v|^{3} dv.
			\end{equation}
			Using Fubini Theorem for nonnegative measurable function and gathering inequalities \eqref{ineqEtmp1} and \eqref{ineqEtmp2}, we finally find a closed differential inequality for $\mathcal{E}(f)$, namely
			\begin{equation}\label{ineqE}
				\frac{d}{dt} \mathcal{E}(f)(t) \leq - \beta_e \, \mathcal{E}(f)(t)^{-a + \frac{3}{2}}, \text{ for }t < T_c.
			\end{equation}
			
			According to this inequality, the cooling process will depend on the nonnegative  coefficient $a$ introduced in the definition of the collision kernel \eqref{ba}.
			We now have to describe rigorously the spaces of solutions to \eqref{homo}, in order to introduce the corresponding Cauchy problem and the main results of this article.
			
		\subsection{Functional Framework and Main Results}
		
			Let us present some functional spaces needed in the paper. We denote by $L_q^1$ for $q \in [1, +\infty)$ the following weighted Lebesgue space
			\begin{equation*}
				L^1_q = \left\{ f : \mathbb{R}^d \rightarrow \mathbb{R} \text{ measurable; }\|f\|_{L^1_q} := \int_{\mathbb{R}^d} |f(v)| \, \langle v\rangle^{q} \, dv < \infty \right\},
			\end{equation*}
			where $\langle v\rangle := \sqrt{1+ |v|^2}$. Thanks to this definition, we can introduce the sets of distribution of given mass $1$, zero momentum and energy $\mathcal E > 0$ as
			\begin{equation*}
				\mathcal G := \left\{ f \in L^1_1 : \int_{\mathbb{R}^d} f(v) \, dv = 1, \int_{\mathbb{R}^d} f(v) \, v \, dv = 0 \right \}
			\end{equation*}
			and
			\begin{equation*}
				\mathcal G_\mathcal{E} := \left\{ f \in \mathcal G : \int_{\mathbb{R}^d} f(v) \, |v|^2 \, dv = \mathcal{E} \right \}.
			\end{equation*}
			Then, we denote by $W_q^{1,1}$ the weighted Sobolev space
			\begin{equation*}
				W_q^{1,1} := \left\{ f \in L^1_q;  \nabla f \in L^1_q \right \}.
			\end{equation*}
			We also introduce the space $BV_q$ of weighted bounded variations functions, given as	the set of weak limits in $\mathcal D'$ of sequences of smooth functions which are bounded in $W_q^{1,1}$. Finally, $\mathcal{M}^1(\mathbb{R}^d)$ is the space of probabilities measures on $\mathbb{R}^d$.
			
			The Cauchy problem (\ref{homo}-\ref{CI}) has been extensively investigated in the past few years, starting with paper \cite{bobylev:2000} and then by A. Bobylev, Cercignani and G. Toscani in \cite{bobylev:2003}, where Maxwellian molecules were considered using Fourier transform. 
			A lot of properties of a more general model which includes elastic and inelastic collisions for Maxwellian molecules as a particular case (but not hard spheres) have also been studied in the chapter of book \cite{bobylev:2008} by A. Bobylev, Cercignani and I. Gamba.
			
			Concerning hard spheres kernel, paper~\cite{Toscani:2000} presents a one dimensional model where the restitution coefficient depends on the relative velocity of the colliding particles through a phenomenological inverse power law.	
			Then, the first important result about tail behaviour in any dimension was given by A. Bobylev, I. Gamba, and V. Panferov in~\cite{bobylev:2004}.
			Existence of smooth solution to a model with stochastic heating was proven by I. Gamba, V. Panferov, and C. Villani
 in~\cite{gamba:2004}.
			S. Mischler and C. Mouhot finally proved existence and uniqueness of weak solutions to the Cauchy problem for \eqref{homo} in \cite{Mischler:20061}.
			The more physical space inhomogeneous setting was investigated by D. Benedetto and M. Pulvirenti in~\cite{Benedetto:2002} for the model introduced in~\cite{Toscani:2000}, but in one dimension of space and velocity.
			Finally, R. Alonso studied in~\cite{Alonso:2009vacuum} the Cauchy problem near vacuum for a large class of restitution coefficients, in any dimension of space and velocity.
			
			Let us define more precisely what we call Cauchy problem for \eqref{homo}.

			\begin{definition}
				Given a nonnegative initial condition $f_{in} \in L^1_2 \cap \mathcal G\,$, a nonnegative function $f$ defined on $[0,T] \times \mathbb{R}^d$ is said to be solution to the Cauchy problem \eqref{homo}-\eqref{CI} if
				\begin{equation*}
					f \in \mathcal C \left( 0,T; L^1_2 \right),
				\end{equation*}
				and if \eqref{homo}-\eqref{CI} holds in the weak sense, namely,
				\begin{equation*}
					\int_0^T \left( \int_{\mathbb{R}^d} f(t,v) \, \partial_t \psi (t,v) \, dv + \left \langle Q_e(f,f)(t,\cdot), \psi \right\rangle \right) dt = \int_{\mathbb{R}^d} f_{in}(v) \, \psi(0,\, \cdot) \, dv,
				\end{equation*}
				for any $\psi \in \mathcal C_c^1 \left( [0,T] \times \mathbb{R}^d \right)$.
			\end{definition}
			
			We notice that it is always possible to assume the initial condition in $\mathcal G$, namely of mass $1$ and zero momentum, since we may always reduce to this case  by a scaling and translation argument, using invariance properties of the collision operator.
			
			Let us now introduce the \emph{cooling time} $T_c$ of $f$ as the time before blow up, namely
			\begin{equation*}
				T_c := \sup \, \{T > 0 : \mathcal{E}(f)(t) > 0, \, \forall \, t < T \}.
			\end{equation*}
			For a collision kernel given by \eqref{ba} and \eqref{HS}, Theorem 1.4 of \cite{Mischler:20061} (recalled in the Appendix as Theorem \ref{thmCauchyPb}) states well-posedness in $L^1_3$ of the Cauchy problem for the granular gases equation, together with the existence of the cooling process when $t \to T_c$.

			We now state main results of this paper concerning cooling process and blow up solutions to the anomalous model. It is known from \cite{Mischler:20061} that if $a$ is greater than $1/2$, $T_c$ is finite, meaning that blow up of $f$ occurs in finite time whereas it takes infinite time for $a \leq 1/2$. 
			We give the accurate asymptotic behaviour of the energy in the main theorems of this article, for all nonnegative values of the parameter a, in agreement with the formal computation \eqref{ineqE}.

			\begin{theorem} \label{thmCrit}
				Let the collision kernel be subject to assertions \ref{ba} and \ref{HS}. 
				If $f$ is solution to \eqref{homo} with an initial condition $0\leq f_{in}\in L_3^1 \cap \mathcal{G}_{1} \cap L^p$ for $p > 1$, there exist some positive constants $C_i$, $i\in \{1,...,6\}$ and $T_c$, depending on $e$, $a$ and $f_{in}$ such that
				\begin{enumerate}[label=(\roman{*})]
					\item \emph{sub-critical case:} if $0 \leq a < 1/2$, and then $\alpha = 1/(2 \, a - 1) < 0$,  \label{thmCoolingInf}
						\begin{equation*}
							\frac{1}{(C_1 \, t + 1)^{-2 \, \alpha}} \leq \mathcal{E}(f)(t) \leq \frac{1}{(C_2 \, t + 1)^{-2 \, \alpha}},\, \forall \, t >0;
						\end{equation*}
					\item \emph{critical case:} if $a = 1/2$,  \label{thmCoolingExp}
						\begin{equation*}
							e^{-C_3 \, t} \leq \mathcal{E}(f)(t) \leq  e^{-C_4 \, t} ,\, \forall \, t >0;
						\end{equation*}
					\item \emph{super-critical case:} if $a > 1/2$, and then $\alpha = 1/(2 \, a - 1) > 0$, \label{thmCoolingFin}
						\begin{equation*}
							\left( -C_5 \, t + 1\right)^{2 \, \alpha} \leq \mathcal{E}(f)(t) \leq \left(-C_6 \, t + 1 \right)^{2 \, \alpha}, \, \forall \, t < T_c.
						\end{equation*}
				\end{enumerate}
				Moreover, for all $a \geq 0$, there exist a function $V \in \mathcal C^1(0,T_c)$ and a nonnegative profile $G \in L_3^1 \cap \mathcal{G}_{1}$ such that the distribution
				\begin{equation*}
					F(t,v) := V(t)^{d} \, G(V(t) \, v), \, \forall \, (t,v) \in (0,T_c) \times \mathbb{R}^d,
				\end{equation*}
				is solution to \eqref{homo}, called \emph{self-similar solution}.	
			\end{theorem}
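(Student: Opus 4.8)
The plan is to split the statement into two largely independent parts: the two-sided bounds on $\mathcal{E}(t)$ in the three regimes, and the construction of the self-similar profile. I would dispatch the energy bounds first, as the upper bounds are immediate. Integrating the closed differential inequality \eqref{ineqE}, namely $\mathcal{E}'(t)\leq-\beta_e\,\mathcal{E}(t)^{3/2-a}$ with $\mathcal{E}(0)=1$, by elementary ODE comparison yields the upper estimates at once. The exponent $3/2-a$ selects the three regimes: it exceeds $1$ exactly when $a<1/2$ (algebraic decay $\sim(C_2t+1)^{2\alpha}$ with $2\alpha=2/(2a-1)<0$), equals $1$ when $a=1/2$ (the exponential $e^{-C_4t}$), and lies below $1$ when $a>1/2$ (forcing $\mathcal{E}$ to vanish at the finite time $T_c$, with profile $(-C_6t+1)^{2\alpha}$, $2\alpha>0$). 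These are exactly cases \ref{thmCoolingInf}--\ref{thmCoolingFin}, and this half needs no input beyond \eqref{ineqE}.

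The lower bounds are the delicate half, obtained by reversing \eqref{edoE1}. Using the upper bound $D(|u|)\leq\beta_2\frac{1-e^2}{4}|\mathbb{S}^{d-2}||u|^2$ from \ref{HS} together with $|u|^3\leq C(|v|^3+|v_*|^3)$ and mass conservation, the dissipation integral in \eqref{edoE1} is controlled by the third moment, giving $\mathcal{E}'(t)\geq-C\,\mathcal{E}(t)^{-a}\int_{\mathbb{R}^d}f(t,v)|v|^3\,dv$. To close this into $\mathcal{E}'(t)\geq-C'\,\mathcal{E}(t)^{3/2-a}$ — which integrates to the stated lower bounds — one needs the uniform-in-time control $\int f|v|^3\,dv\leq C\,\mathcal{E}(t)^{3/2}$, \ie a bound on the third moment of the energy-normalized distribution. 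This is precisely where the hypotheses $f_{in}\in L^1_3\cap L^p$ and the Povzner-type moment machinery of \cite{Mischler:20061} enter: one shows that the rescaled density remains in a bounded, hence relatively compact, subset of $L^1_3$, so its third moment cannot blow up relative to its energy. I expect this uniform moment and compactness estimate to be the main obstacle of the whole theorem.

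For the self-similar solutions I would exploit the scaling of $Q_e$. Writing $\mathcal{T}_\lambda f:=\lambda^d f(\lambda\,\cdot)$, one has $\mathcal{E}(\mathcal{T}_\lambda f)=\lambda^{-2}\mathcal{E}(f)$, while the hard-sphere part of the operator is homogeneous of degree $-1$ under $\mathcal{T}_\lambda$; combining this with the energy factor $\mathcal{E}^{-a}$ from \ref{ba} gives $Q_e(\mathcal{T}_\lambda f,\mathcal{T}_\lambda f)=\lambda^{2a-1}\,\mathcal{T}_\lambda Q_e(f,f)$. Inserting the ansatz $F(t,\cdot)=\mathcal{T}_{V(t)}G$ into \eqref{homo} and computing $\partial_t F=\frac{V'}{V}\,\mathcal{T}_{V}\big(\nabla_v\cdot(vG)\big)$, the time dependence separates as soon as $V'/V^{2a}$ is constant: this forces the ODE $V'=c\,V^{2a}$ together with the stationary \emph{drift equation} $Q_e(G,G)=c\,\nabla_v\cdot(vG)$ for the profile. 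Testing the latter against $|v|^2$ identifies $c>0$ with (half) the normalized dissipation rate of $G$, so $V$ is increasing and $\mathcal{E}(F(t))=V(t)^{-2}\mathcal{E}(G)$ decreases, as required for a cooling solution.

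Solving $V'=c\,V^{2a}$ explicitly gives $V\in\mathcal{C}^1(0,T_c)$ with $T_c$ finite iff $a>1/2$, and reproduces the exponents $2\alpha$ above through $\mathcal{E}(F(t))=V(t)^{-2}\mathcal{E}(G)$; the very same change of variables, which pinches the rescaled energy between positive constants, is what underlies the two-sided bounds of the first two paragraphs. It then remains to construct a nonnegative $G\in L^1_3\cap\mathcal{G}_1$ solving the drift equation. I would obtain $G$ as a steady state of the rescaled flow by a fixed-point argument for the inelastic Boltzmann equation with drift: the a priori moment, $L^p$ and $BV_4\cap L^1_5$ estimates inherited from the hypotheses and from Theorem \ref{cauchy} provide the compactness needed to pass to the limit, and a Schauder-type or $\omega$-limit argument produces $G$, with nonnegativity and the prescribed mass, momentum and unit energy preserved along the scaling. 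This existence of the profile, which rests on the same uniform estimates flagged above, is the crux; the remaining steps are bookkeeping of the scaling exponents.
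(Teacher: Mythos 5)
Your decomposition is sound and the self-similar part is essentially the paper's: both reduce the profile equation to the stationary inelastic Boltzmann equation with an (anti-)drift term, $\tau\,\nabla_w\cdot(wG)=Q_e(G,G)$, whose solvability is imported from Mischler--Mouhot (Theorem \ref{thmMiMo} in the paper), and your scaling computation $Q_e(\mathcal T_\lambda f,\mathcal T_\lambda f)=\lambda^{2a-1}\mathcal T_\lambda Q_e(f,f)$, the ODE $V'=cV^{2a}$ and the sign of $c$ are all correct and match the paper's $V'=\tau\,\mathcal E(f)^{-a}$ once you use $\mathcal E(F)=V^{-2}\mathcal E(G)$. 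Where you genuinely diverge is the two-sided energy estimate. You split it: upper bounds by integrating \eqref{ineqE} directly (correct, and this is indeed where the paper's inequality \eqref{ineqE} comes from), and lower bounds by reversing \eqref{edoE1}, which requires the uniform ratio bound $m_{3/2}(f)(t)\leq K\,\mathcal E(f)(t)^{3/2}$. The paper instead gets \emph{both} bounds at once and without ever bounding $m_{3/2}(f)$ from above: it differentiates the identity $V=(\mathcal E(g)\circ T/\mathcal E(f))^{1/2}$ against $V'=\mathcal E(f)^{-a}$ to obtain the exact relation \eqref{anomEq}, and then only needs two inputs about the rescaled solution $g$ of the drift equation, namely $0<c_0\leq\mathcal E(g)\leq c_1$ and $\tfrac{d}{ds}\mathcal E(g)(s)\to0$ (Proposition \ref{propLimDer}, itself proved via the third-moment decay of Lemma \ref{lemMom2} for the drift-free equation). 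The payoff of the paper's route is that the delicate quantitative input is isolated in a soft limit statement; the payoff of yours is that the upper half is elementary and the mechanism producing the three regimes is transparent from the exponent $3/2-a$.

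The one point you should not leave as a gesture is the lower-bound lemma $m_{3/2}(f)\lesssim\mathcal E(f)^{3/2}$ uniformly up to $T_c$. ``Bounded in $L^1_3$'' for the rescaled density is not by itself enough: you need the rescaled energy bounded \emph{below} (so that the normalization is meaningful), and to invoke the uniform-in-time moment propagation of \cite{Mischler:20062,mischler:20091} you must first know that the rescaled density solves the autonomous drift equation \eqref{drift} --- which forces exactly the nonlinear, energy-dependent change of variables $V'=\tau\,\mathcal E(f)^{-a}$ that the paper constructs (a linear or explicit rescaling does not remove the factor $\mathcal E(f)^{-a}$ from \eqref{fullRescaled}). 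Alternatively you could run a Povzner comparison argument for the ratio $m_{3/2}/\mathcal E^{3/2}$ directly on \eqref{homo2}, in the spirit of the paper's Lemma \ref{lemMom2}; either way this step is where the real work lives, and as written your proposal assumes it rather than proves it.
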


			We propose in Section \ref{secUnifProofCrit} a proof of this Theorem based on the analysis of self-similar solutions to the inelastic Boltzmann equation with a linear drift term (widely studied in \cite{Mischler:20062,mischler:20091}) and the asymptotic analysis of the time derivative of the energy. To this end, we introduce a new self-similar scaling, nonexplicit and strongly nonlinear (energy-dependant, as in~\cite{Carrillo:2006Self}).
					
			\begin{remark}
				One can check that the point \ref{thmCoolingInf} of this theorem is in good agreement with the one proven in \cite{Mischler:20062} with $a = 0$, and which is classically known as \emph{Haff's Law}\cite{haff:1983}: 
				\begin{equation*}
					\frac{m_1}{(1+t)^{2}} \leq \mathcal{E}(f)( t ) \leq \frac{M_1}{(1+t)^{2}}.
				\end{equation*}
			\end{remark}

			We then state a theorem of existence and uniqueness (up to a translation of time) of self-similar solutions to equation \eqref{homo} with $a \geq 0$. We also obtain convergence of solutions to \eqref{homo} bounded in $L^1_3$ toward these particular solutions.
			
			\begin{theorem} \label{thmAttract}
				Let the collision kernel be subjects to assertions \eqref{ba} and \eqref{HS} with $a \geq 0$. There exists a constructive $e_* \in (0,1)$ such that for all $e \in [e_*, 1)$,
				\begin{enumerate}[label=(\roman{*})]
					\item the self-similar profile $G = G_e \in L_3^1 \cap \mathcal{G}_{1}$ is unique and if $ F_e$ and $\bar F_e$ are two self-similar solutions to \eqref{homo}, there is a time $t_0 < T_c$ such that $\bar F_e (t,v) = F_e (t+t_0,v)$ for $t > \max \{0,-t_0\}$.
					\item For $p > 1$ and any $M_0>0$, there exists $\eta \in (0,1)$ such that if
						\begin{equation*}
							\left\{ \begin{aligned}
								& f_{in} \in L^1_3 \cap \mathcal{G}_{1} \cap L^p, \\
								& \| f_{in} \|_{L^1_3}\leq M_0, \end{aligned} \right.
						\end{equation*}
						the solution $f$ to equation \eqref{homo} satisfies for a nonnegative constant $C$
						\begin{equation*}
							\| f(t, \cdot) - F_e(t, \cdot) \|_{L^1} \leq \left \{
							\begin{aligned}
								& \frac{C}{( 1+C_2 \, t)^{- \alpha  \, \mu_e}} & \text{ if } a < 1/2, \\
								& C e^{- C_4 \, \mu_e \, t/2} & \text{ if } a = 1/2, \\
								& C ( 1-C_6 \, t )^{\alpha \, \mu_e} & \text{ if } a > 1/2,
							\end{aligned} \right.
						\end{equation*} 
						where $\alpha = 1/(2 \, a - 1)$ and $\mu_e = (1-\eta) + \mathcal O(1-e)$.
				\end{enumerate}			
			\end{theorem}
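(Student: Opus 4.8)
The plan is to recast \eqref{homo}, through the strongly nonlinear self-similar change of variables announced in Section \ref{secUnifProofCrit}, into an inelastic Boltzmann equation with a drift term for which the profile $G_e$ is a stationary state, and then to transport the relaxation estimates obtained in the rescaled time $\tau$ back to the physical time $t$. Concretely, I set
\[
	f(t,v) = V(t)^{-d}\, g\bigl(\tau(t),\, v/V(t)\bigr),\qquad V(t):=\mathcal{E}(t)^{1/2},
\]
so that $g(\tau,\cdot)$ carries unit mass and unit energy for every $\tau$, and I define the new time by the nonexplicit law $d\tau = \mathcal{E}(t)^{1/2-a}\,dt$ (up to a fixed positive constant). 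Using the homogeneity $Q_e(\lambda^d h(\lambda\,\cdot),\lambda^d h(\lambda\,\cdot)) = \lambda^{2a+d-1}\,Q_e(h,h)(\lambda\,\cdot)$ that follows from \ref{ba}, this turns \eqref{homo} into
\[
	\partial_\tau g = Q_e(g,g) - \tfrac12\, D[g]\,\nabla_w\!\cdot(w\,g),
\]
where the drift coefficient $\tfrac12 D[g] := -\tfrac12\langle Q_e(g,g),|w|^2\rangle>0$ is exactly the nonlinear functional enforcing $\mathcal{E}(g)\equiv 1$. Its unit-energy stationary solutions are the self-similar profiles, and the virtue of the nonlinear $\tau$ is that this single equation, autonomous in the limit, governs all regimes $a\ge 0$ at once, the trichotomy of Theorem \ref{thmCrit} being entirely encoded in the map $t\mapsto\tau(t)$.

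For part (i) I would work at the level of the drift equation and exploit the near-elastic regime. As $e\to1$, $Q_e$ converges to the elastic hard-spheres operator, whose unique unit-energy equilibrium is the Maxwellian $\mathcal{M}$ and whose linearisation enjoys a spectral gap in $L^2(\mathcal{M}^{-1/2})$. Following the spectral-perturbation and space-enlargement arguments developed for this drift equation in \cite{Mischler:20062,mischler:20091}, I would show that for $e$ close enough to $1$ the linearisation of $g\mapsto Q_e(g,g)-\tfrac12 D[g]\nabla_w\!\cdot(w g)$ about a unit-energy steady state still possesses a spectral gap $\mu_e=(1-\eta)+\mathcal{O}(1-e)$ in the weighted $L^1$ norm, which yields the local uniqueness of $G_e$ (existence being already granted by Theorem \ref{thmCrit}); a continuation argument resting on the uniform $L^1_3$ bounds of Theorem \ref{cauchy} then upgrades this to global uniqueness and pins down the constructive threshold $e_*$. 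The statement up to a time shift follows softly: two self-similar solutions share the profile $G_e$, hence their energies both solve the autonomous scalar ODE $\dot{\mathcal{E}}=-K_e\,\mathcal{E}^{-a+3/2}$ obtained by saturating \eqref{ineqE} with $K_e$ fixed by $G_e$, and an autonomous first-order ODE has all its solutions related by a translation $t\mapsto t+t_0$, which is precisely the claimed shift.

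For part (ii), the same spectral gap, combined with propagation of the moment bound $\|f_{in}\|_{L^1_3}\le M_0$ to close the nonlinear remainder $Q_e(g-G_e,g-G_e)$ and the variation $D[g]-D[G_e]$, produces exponential relaxation in rescaled time, $\|g(\tau,\cdot)-G_e\|_{L^1}\le C\,e^{-\mu_e\tau}$. Since the $L^1$ norm is invariant under mass-preserving rescaling, once the representative $F_e$ is fixed by the shift of part (i) so that $\mathcal{E}(f)$ and $\mathcal{E}(F_e)$ share the same leading asymptotics, controlling $\|f(t,\cdot)-F_e(t,\cdot)\|_{L^1}$ reduces to this relaxation. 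It then remains to substitute the profile $\tau(t)$ read off from Theorem \ref{thmCrit}: inserting $\mathcal{E}(t)\sim(1+C t)^{2\alpha}$, resp. $e^{-Ct}$, resp. $(1-Ct)^{2\alpha}$, into $d\tau=\mathcal{E}^{1/2-a}dt$ gives $\tau(t)\sim-\alpha\log(1+C_2 t)$ when $a<1/2$, $\tau(t)\sim C_4 t/2$ when $a=1/2$, and $\tau(t)\sim-\alpha\log(1-C_6 t)$ when $a>1/2$; feeding these into $e^{-\mu_e\tau}$ reproduces exactly the three advertised rates, with exponent $\alpha\mu_e$ in the two power-law cases and $C_4\mu_e/2$ in the exponential one.

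I expect the main obstacle to be the spectral gap estimate for the linearised drift operator, together with the \emph{constructive} control of both $e_*$ and the rate $\mu_e=(1-\eta)+\mathcal{O}(1-e)$: one must transfer the gap of the elastic linearised collision operator, classical in the Hilbert space $L^2(\mathcal{M}^{-1/2})$, to the weaker weighted $L^1$ setting in which convergence is asserted, and then absorb the inelastic correction and the nonlocal nonlinear drift uniformly in $e$. A second, genuinely new difficulty is to prove that the strongly nonlinear time change $\tau(t)$ is a well-defined $\mathcal C^1$ diffeomorphism of $(0,T_c)$ onto $(0,\infty)$ in every regime — in particular that $\tau(t)\to\infty$ as $t\to T_c$ in the super-critical case — since it is only through this surjectivity that the exponential relaxation in $\tau$ converts into the finite-time convergence of $f$ toward $F_e$ near $T_c$.
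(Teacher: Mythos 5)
Your strategy is, in outline, the same as the paper's: rescale \eqref{homo} by an energy-dependent change of variables into a drift--collision equation, invoke the weak-inelasticity uniqueness and exponential attractivity of the self-similar profile established in \cite{Mischler:20062,mischler:20091}, and convert the exponential decay in rescaled time back to physical time through the cooling laws of Theorem \ref{thmCrit}; your final bookkeeping (feeding the three asymptotics of $\mathcal{E}$ into the time change to recover the exponents $\alpha\mu_e$ and $C_4\mu_e/2$) matches the paper's computation. The substantive difference is your choice of normalization. You force $\mathcal{E}(g)\equiv 1$, which turns the drift coefficient into the nonlinear functional $D[g]$; the paper instead keeps a \emph{constant} drift coefficient in \eqref{drift} and chooses it to be $\tau_e=1-e$, accepting that $\mathcal{E}(g)$ is only pinched between two constants (Theorem \ref{thmMiMo}). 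That choice is precisely what lets the paper quote Theorem 1.1.iv of \cite{mischler:20091} verbatim --- uniqueness of $G_e$, the constructive $e_*$, and the decay $\| g(t,\cdot)-G_e\|_{L^1_2}\le e^{-(1-\eta)\nu_e t}$ with $\nu_e=\tau_e+\mathcal{O}(\tau_e^2)$ --- so that no spectral analysis is redone: the whole of part (i) and the rescaled-time half of part (ii) are imported, and the only new work is the identity $\nu_e T(t)=\log V(t)+\mathcal{O}(1-e)$ combined with $V(t)=\bigl(\mathcal{E}(g)(T(t))/\mathcal{E}(f)(t)\bigr)^{1/2}$, which gives $\|f(t,\cdot)-F_e(t,\cdot)\|_{L^1}\le C\,\mathcal{E}(f)(t)^{\mu_e/2}$ before the cooling law is applied. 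In your unit-energy gauge the linearization acquires an extra nonlocal rank-one term from the variation of $D[g]$, so the spectral gap of \cite{mischler:20091} cannot be cited as is and would have to be re-established; that is avoidable work, and it is the one place where your write-up is a sketch rather than a proof. Two smaller remarks: your derivation of the time-shift statement from the exact autonomous ODE $\dot{\mathcal{E}}=-K_e\,\mathcal{E}^{3/2-a}$ satisfied by a self-similar solution is a clean and correct alternative to the paper's appeal to \cite{mischler:20091}; and the surjectivity of the time change onto $(0,\infty)$ that you flag as a difficulty is already settled in Subsection \ref{subNLScal}, where $T(t)=\log V(t)/\tau$ and $V(t)\to\infty$ as $t\to T_c$ follow from the bounds \eqref{haffg} and the raw cooling process \eqref{asymptConv}.
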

			
			The proof of this Theorem is also based on the analysis of self-similar solutions to the inelastic Boltzmann equation with drift term, together with the use of the explicit cooling processes of Theorem \ref{thmCrit}.
			
		\subsection{Plan of the Paper}
		
			The article is organized as follows. 			
			We prove Theorem \ref{thmCrit} in Section \ref{secUnifProofCrit}. 
			We give a nonlinear rescaling that allows us to treat the three different cooling process at once in Subsection \ref{subNLScal}. 
			We then use the expression of this rescaling to state a relation between energy in classical and self-similar variables. 
			Subsequently, we show in Subsection \ref{subLemma} results concerning the asymptotic behaviour of solutions to the granular gases equation with drift term, namely that their energy and time derivative are uniformly bounded in time.
			We finally use this result in Subsection \ref{subUnifProofCool} to establish the rigorous cooling process. 
			
			We then apply in Section \ref{secSelfSimUniqu} this precise asymptotic behaviour together with a Theorem of uniqueness of self-similar profiles already proven in \cite{mischler:20091} of S. Mischler and C. Mouhot to show the Theorem \ref{thmAttract} about rate of convergence toward self-similar solutions. 
			
			Finally, we recall in the Appendix some important results we used in the paper.
				
	\section{Proof of Theorem \ref{thmCrit}} 
		\label{secUnifProofCrit}
		
		This section presents the proof of Theorem \ref{thmCrit} which studies the cooling process for sub-critical ($a < 1/2$), critical ($a = 1/2$) and super-critical ($a > 1/2$) cases. To this end, we will prove some new estimates for the granular gases equation, with and without drift term. Let us start by introducing some nonclassical self-similar variables.
		
		\subsection{Nonlinear Self-similar Variables} \label{subNLScal}
		
			We shall use a rescaling (seen \eg in \cite{ernst:2002} and \cite{Mischler:20062}), in order to prevent the blow up of $f$ by ``zooming'' on the distribution and studying it in self-similar variables. 
			
			We will assume that the collision kernel is given by \eqref{ba} and  \eqref{HS} with $a \geq 0$. 
			The granular gases equation \eqref{homo} then reads
			\begin{equation} \label{homo2}
				\frac{\partial f}{\partial t}(t,v) = \mathcal{E}(f)(t)^{-a} Q_e(f, f)(t,v),
			\end{equation}
			where the collision operator is given for  test functions $\psi$ by
			\begin{equation*}
				\langle Q_e(f,g), \psi \rangle = \frac{1}{2}\int_{\mathbb{R}^d \times \mathbb{R}^d \times \mathbb{S}^{d-1}} |u| f_{*} \, g \, \left(\psi' + \psi_*' - \psi - \psi_* \right)  b (\widehat u \cdot \omega) d\omega \, dv \, dv_*. 
			\end{equation*}			
			The rescaling of the distribution $f$ will be written according to \cite{ernst:2002} as
			\begin{equation} \label{cgVar}
				\left\{ \begin{aligned} 
					& f(t,v) = V(t)^d \,g(T(t), V(t) \, v), \\
					& V(0) = 1, \ T(0) = 0, \\
					& \lim_{t \to T_c} T(t) = \lim_{t \to T_c} V(t) = + \infty.
				\end{aligned}\right.
			\end{equation} 
			The functions $T$ and $V$ have to be\emph{ strictly increasing }in time for this rescaling to be well defined.
			
			We shall determine $T$ and $V$ (the \emph{self-similar variables}) and derive the equation followed by the distribution $g = g(s,w)$ with $s = T(t)$ and $w = V(t) \, v$. The term  $V(t)^d$ in front of $g$ is simply given by  mass conservation. Moreover, by making a change of variables in the expression of the collision operator and thanks to hard sphere collision kernel, one gets that
			\begin{equation*}
				Q_{e} \left( g(\lambda \, \cdot), g(\lambda \, \cdot) \right) (v) = \lambda^{-d-1} Q_{e} (g,g)(\lambda v).
			\end{equation*}
			Especially, the bilinearity of operator $Q_e$ together with \eqref{cgVar} yields
			\begin{equation} \label{QSelfSim}
				Q_e(f,f)(t, \cdot) = (V(t))^{d-1} Q_{e}(g,g)(s, \cdot).
			\end{equation}
	
			\begin{remark}
				Because of \eqref{QSelfSim}, our approach of the problem cannot be immediately extended to the case of variable restitution coefficient. 
				Indeed, if $e = e(|u \cdot \widehat{n}|)$ where $\widehat n$ is the unit vector joining the center of the particles (this case cover the classical \emph{visco-elastic hard spheres} model, see N. Brilliantov and T. Pöschel~\cite{brilliantov:2000a}), introducing the self-similar variables \eqref{cgVar} in relation \eqref{QSelfSim} gives
				\begin{equation*}
					Q_e(f,f)(t, \cdot) = (V(t))^2 Q_{\widetilde{e}(t, \cdot)}(g,g)(s, \cdot),
				\end{equation*}
				that is collision operator becomes time-dependant through a new restitution coefficient $\widetilde{e}$. 
				Finally, the new collision equation is not ``autonomous'', which prevents us to use some previous results on this equation as we will do in the following (\eg Haff's Law).
			\end{remark}				
				
			
			Now, if $f$ follows rescaling \eqref{cgVar}, its time derivative becomes 
			\begin{equation} \label{derFSelfSim}
				\partial_t f = (V(t))^{d-1} \left( T'(t) \, V(t) \, \partial_s g + V'(t) \, \nabla_w \cdot (w g) \right).
			\end{equation}
			Thanks to relations \eqref{QSelfSim} and \eqref{derFSelfSim}, if $f$ is solution to equation \eqref{homo2}, the distribution $g$ is solution to 
			\begin{equation}\label{fullRescaled}
				T'(t) \, V(t) \, \partial_s g + V'(t) \nabla_w \cdot (w g) = \mathcal{E}(f)(t)^{-a} \, Q_{e}(g,g).
			\end{equation}
			We shall get rid of the term  in \eqref{fullRescaled} involving a negative power of the kinetic energy to obtain a simpler equation, the classical homogeneous Boltzmann equation with an anti-drift term. 
			
			To this end, let us assume the rescaling to be ``nonlinear'' by asking it to depend on the energy of the solution itself:
			\begin{equation*}
				\left \{ \begin{aligned}
					& V'(t) = \tau\, \mathcal{E} \left( f \right)(t)^{-a}, \\
					& T'(t) \, V(t) = V'(t)/\tau,
				\end{aligned} \right.
			\end{equation*}
			where $\tau$ is a nonnegative parameter. 
			This idea is very close to the one used in \cite{Carrillo:2006Self} for nonlinear diffusion equations (sometimes referred to as the \emph{Toscani Map}).
			It was also extended in \cite{Carrillo:2007a} to show the apparition of chaotic behaviour for a precise (constructive) nonlinearity : the self-similar profile of this equation ``oscillates'' between Gaussian (heat equation) and Zel'dovich-Kompaneets-Barenblatt (porous medium equation) profiles. 
			
			The functions $V$ and $T$ are not explicit but one can see that they are well defined and fulfill the requirement of the scaling \eqref{cgVar}. 
			Indeed, the map $t \mapsto \mathcal{E} (f)(t)^{-a}$ is strictly increasing on $[0, T_c)$ thanks to the cooling process \eqref{asymptConv}, is $1$ when $t=0$ given that the energy of the initial distribution $f_{in}$ has been normalized to this value and tends to infinity when $t \to T_c$. 
			Moreover, one has $T(t) = \log(V(t))/\tau$ with $\tau \geq 0$ and then T is increasing on $[0, T_c)$ with $T(0) = 0$ and $\lim_{t\to T_c} T(t) = + \infty$.
			 
			 With such an expression for $V$ and $T$ plugged in \eqref{fullRescaled}, the distribution $g$ is solution to the following equation: 
			\begin{equation} \label{drift}
				\frac{\partial g}{\partial s} + \tau \, \nabla_w \cdot (w g) = Q_{e}(g,g).
			\end{equation}
			Actually, this is the granular gases equation for inelastic hard spheres with constant restitution coefficient, complemented with an \emph{anti-drift} term. This last term will act like an input of energy and will prevent the blow up of $g(s, \cdot)$ toward a Dirac mass when $s \to \infty$. 
			This equation has been thoroughly studied in articles \cite{Mischler:20062,mischler:20091}. 
			Especially, we know that the kinetic energy of $g$ is bounded by above and below by two positive constants and that there exists steady states $G$ for this equation.
			We summed up this result in the Appendix as Theorem \ref{thmMiMoE}.

			Summarizing, the distribution $f$ defined by
			\begin{equation*}
				\left\{ \begin{aligned} 
					& f(t,v) = V(t)^d \, g(T(t), V(t) \,v), \\
					& V'(t) = \tau \, \mathcal{E} (f)(t)^{-a}, \\
					& T(t) = \log \,(V(t))/\tau,
				\end{aligned}\right.
			\end{equation*} 
			is solution to the granular gases equation \eqref{homo2} for all times as soon as the function $g$ is solution to  drift-collision equation \eqref{drift}. Conversely, if $f$ is solution to \eqref{homo2}, one can associate a solution $g$ to rescaled equation \eqref{drift} by setting
			\begin{equation*}
				g\,(s,w) = e^{-d  \tau s} f \left( V^{-1}\left(  e^{\tau s}\right), e^{-\tau s} \, w \right).
			\end{equation*}
			Note that the inverse $V^{-1}$ of $V$ is well defined, by the discussion above.
			
			Moreover, using the change of variables $w = V(t) v$, the following relation between energies of $f$ and $g$ holds: 
			\begin{equation*}
				\mathcal{E} (g)(T(t)) = (V(t))^{2} \, \mathcal{E} (f)(t) .
			\end{equation*}
			Then, the function $V$ can be written for all $0 \leq t < T_c$ as
			\begin{equation} \label{eqVEfg}
				V(t) = \left( \frac{\mathcal{E} (g)(T(t))}{\mathcal{E}(f)(t)}\right)^{1/2}.
			\end{equation}
			Using this expression, the bounds \eqref{haffg} of the energy of $g$ and the raw cooling process \eqref{asymptConv}, one has another proof that $V(t) \to \infty$ and $T(t) = \log(V(t)) \to \infty$ when $t \to T_c$.
			
			Finally, if $G$ is a \emph{self-similar profile}, that is a stationary solution to \eqref{drift}, we may associate a self-similar solution $F$ to the original equation \eqref{homo2} by setting
			\begin{equation*}
				F(t,v) = V(t)^{d}G(V(t) v), \ \forall (t,v) \in (0,T_c) \times \mathbb{R}^d. 
			\end{equation*}
			Such a $G$ exists thanks to Theorem \ref{thmMiMoE}, which proves the last assertion of Theorem \ref{thmCrit}.
			
		\subsection{Preliminary Results} \label{subLemma}
		
			We will show in this Subsection two results concerning solutions to the granular gases equation with and without drift term, that we will need in order to prove rigorously the cooling process.
			
			Let $h = h(t,v)$ for $t \geq 0$ and $v \in \mathbb{R}^d$ be solution to the inelastic Boltzmann equation 
			\begin{equation}\label{boltzEq}
			 	\frac{\partial h}{\partial t} = Q_e(h,h),
			 \end{equation}
			 that is the collision equation \eqref{homo2} with $a = 0$. According to \cite{alonso:2009} or \cite{Mischler:20062}, the (sub-critical) Haff's Law holds:
			\begin{equation} \label{haffh}
				\frac{m}{(1+\mu_0 \, t)^2} \leq \mathcal{E}(h)(t)\leq \frac{M}{(1+\mu_0 \, t)^2}, \, \forall t > 0.
			\end{equation}
			An upper control of the third order moment of $h$ can be deduced from this law. Let $m_l(h)$ denotes the $2l$-th order moment of a distribution $h$, that is
			\begin{equation*}
				m_l(h) = \int_{\mathbb{R}^d} h(v) \, |v|^{2l} \, dv.
			\end{equation*}
			
			\begin{lemma} \label{lemMom2}
				Let $h$ be solution to equation \eqref{homo2} for $a=0$, with an initial condition $h_{in} \in \mathcal G \cap L^p$ for $p > 1$. There exists a nonnegative constant $\kappa$ such that if  $m_{3/2}(h_{in}) \leq \kappa$, then for all $t>0$,
				\begin{equation} \label{inegMom2}
					0 \leq m_{3/2}(h)(t) \leq \frac{\kappa}{\left( 1+\mu_0 \, t \right)^{3}}.
				\end{equation}
			\end{lemma}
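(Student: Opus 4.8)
The plan is to derive a closed differential inequality for $m_{3/2}(h)(t)$ and then compare it against the candidate barrier $\kappa(1+\mu_0 t)^{-3}$. First I would write the evolution of the cubic moment by testing \eqref{boltzEq} against $\psi(v)=|v|^3$ in the weak formulation \eqref{Qweak} (with $a=0$, so that $b=b_1$), which gives
\[
\frac{d}{dt}m_{3/2}(h)=\frac12\int_{\mathbb{R}^d\times\mathbb{R}^d}|u|\,h\,h_*\left(\int_{\mathbb{S}^{d-1}}\big(|v'|^3+|v_*'|^3-|v|^3-|v_*|^3\big)\,b_1(\widehat u\cdot\omega)\,d\omega\right)dv\,dv_*.
\]
Since the exponent $3>2$ is strictly super-quadratic, the angular integrand is amenable to a sharp Povzner-type inequality of the kind used in \cite{bobylev:2000,Mischler:20062}: there are constants $\gamma>0$ and $C$, depending only on $e$, $d$, $\beta_1$, $\beta_2$, such that the bracket is bounded above by $-\gamma(|v|^3+|v_*|^3)+C(|v|^2|v_*|+|v||v_*|^2)$.

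Next I would insert this bound, use $|u|\le|v|+|v_*|$ in the (positive) cross terms and the elementary bound $|u|\ge|v|-|v_*|$ to extract a coercive $m_2(h)$ contribution from the negative term, and reduce the double integrals to products of one-dimensional moments. Controlling the mixed moments by $m_{1/2}(h)\le\mathcal E(h)^{1/2}$ (Cauchy–Schwarz with unit mass) and $m_1(h)=\mathcal E(h)$, this yields an inequality of the form
\[
\frac{d}{dt}m_{3/2}(h)\le -C_1\,m_2(h)+C_2\,\mathcal E(h)^{1/2}m_{3/2}(h)+C_3\,\mathcal E(h)^2.
\]
The key algebraic input is then the Hölder interpolation $m_{3/2}(h)^2\le\mathcal E(h)\,m_2(h)$, which converts the coercive term into $-C_1\,m_{3/2}(h)^2/\mathcal E(h)$, together with the two-sided Haff's Law \eqref{haffh}. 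Substituting the upper bound $\mathcal E(h)\le M(1+\mu_0 t)^{-2}$ in the negative term and the lower bound $\mathcal E(h)\ge m(1+\mu_0 t)^{-2}$ in the positive ones, every term acquires an explicit power of $(1+\mu_0 t)$, leaving
\[
\frac{d}{dt}m_{3/2}(h)\le -A(1+\mu_0 t)^2 m_{3/2}(h)^2+B(1+\mu_0 t)^{-1}m_{3/2}(h)+D(1+\mu_0 t)^{-4}.
\]

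To close the argument I would set $m_{3/2}(h)(t)=z(t)(1+\mu_0 t)^{-3}$; a direct computation turns the previous inequality into $(1+\mu_0 t)\,z'\le -Az^2+(B+3\mu_0)z+D$, whose right-hand side is a downward parabola in $z$. Choosing $\kappa$ to be its largest root $z_+=\big((B+3\mu_0)+\sqrt{(B+3\mu_0)^2+4AD}\big)/(2A)$, the right-hand side is nonpositive whenever $z\ge\kappa$, so $z$ can never cross the level $\kappa$ from below; hence $m_{3/2}(h_{in})=z(0)\le\kappa$ forces $z(t)\le\kappa$ for all $t>0$, which is exactly \eqref{inegMom2}.

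The main obstacle is the first part: producing the \emph{coercive} top-order term, i.e.\ proving that the Povzner constant $\gamma$ is strictly positive and, crucially, that after integration against $|u|\,h\,h_*$ the coercivity survives as a genuine $-C_1 m_2(h)$ with $C_1>0$ rather than being absorbed by the mixed-moment corrections. This is the only place where the quantitative structure of the inelastic kernel and the hypotheses \ref{ba}--\ref{HS} really enter; once that estimate is in hand, the Hölder interpolation, the insertion of Haff's Law, and the barrier (comparison) argument are routine.
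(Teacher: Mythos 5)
Your proposal is correct and follows essentially the same route as the paper: a Povzner-type estimate for $\int Q_e(h,h)\,|v|^3\,dv$ (the paper quotes Lemma~3 of Bobylev--Gamba--Panferov \cite{bobylev:2004}) yielding coercivity in $m_2$ plus mixed lower-order terms, then Haff's Law \eqref{haffh} and a barrier/comparison argument against $\kappa(1+\mu_0 t)^{-3}$; the only cosmetic difference is that you close the coercive term via $m_{3/2}^2\le \mathcal E\, m_2$ combined with Haff's upper bound, where the paper uses the mass-one Jensen inequality $m_2\ge m_{3/2}^{4/3}$, which avoids invoking Haff's Law in that term. One small slip: to control the positive terms $C_2\,\mathcal E^{1/2}m_{3/2}+C_3\,\mathcal E^2$ from above you need the \emph{upper} bound of Haff's Law, not the lower one as you wrote --- the powers in your displayed inequality are the correct ones, so this is only a misstatement.
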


			\begin{proof}
				Setting $m_l = m_l(h)$ and multiplying \eqref{homo2} with $a=0$ by $|v|^3$, one gets after integration in velocity space
				\begin{equation*}
					\frac{d}{dt} m_{3/2}(t) =  \int_{\mathbb{R}^d} Q_{e}(h,h)(t,v) \, |v|^{3} \, dv.
				\end{equation*}
				Some more informations are then needed regarding the third order moment of the collision operator. A.V. Bobylev, I.M. Gamba and V.A. Panferov had shown in \cite{bobylev:2004} (and this result was then extended to the case of bounded cross section in~\cite{alonso:2009}, see Appendix, Theorem \ref{thmBGPmoments}) the following estimate:
				\begin{equation*}
					\int_{\mathbb{R}^d} Q_{e}(h,h)(t,v) \, |v|^{3} \, dv \leq -(1-\gamma) \, m_2(t) + \gamma \, S_{3/2}(t),
				\end{equation*}
				where $ 0 < \gamma < 1$ and
				\begin{align*}
					S_{3/2} & = \sum_{k=1}^{\left[\frac{5}{4}\right]} \begin{pmatrix} 3/2 \\ k \end{pmatrix}(m_{k+1/2}\,  m_{3/2-k} + m_k \, m_{3/2-k+1/2}) \\
					 & = \frac{3}{2}(m_{3/2} \, m_{1/2} + m_1^{2}).
				\end{align*}
				By definition, we have $m_1 = \mathcal{E}(h)$. 
				Besides, by convexity, $m_2 \geq m_{3/2}^{4/3}$ and $m_1 \geq m_{1/2}^2$. 
				Thus, $m_{3/2}(t)$ verifies thanks to the sub-critical Haff's Law \eqref{haffh}:
				\begin{align*}
					\frac{d}{dt} m_{3/2}(t) & \leq - (1-\gamma) \, m_{3/2}(t)^{4/3} + \frac{3}{2} \gamma \left (\mathcal{E}(h)(t)^2 + m_{3/2}(t) \, \mathcal{E}(h)(t)^{1/2} \right ) \\
					& \leq - (1-\gamma) \, m_{3/2}(t)^{4/3} + \frac{3}{2} \gamma \left ( \frac{ M^2 }{\left(1+\mu_0 \, t\right)^4} + m_{3/2}(t) \, \frac{ M^{1/2} }{1+\mu_0 \, t}\right ).
				\end{align*}
				Then, one has
				\begin{align*}
					\frac{d}{dt}\left ( m_{3/2}(t) -\frac{\kappa}{ \left(1+\mu_0 t\right)^{3}}\right ) \leq & - (1 - \gamma) \, m_{3/2}(t)^{4/3} + \frac{3 \, \mu_0 \, \kappa}{\left(1+ \,\mu_0 \, t\right)^{4}} \\
					 & + \frac{3}{2} \gamma \left ( \frac{M^2}{\left( 1+\mu_0 \, t \right)^4} + m_{3/2} (t) \frac{ M^{1/2} }{1+\mu_0 \, t}\right ),
				\end{align*}
				where $\kappa$ is a nonnegative constant such that $m_{3/2}(0) \leq \kappa$.
				Then inequality \eqref{inegMom2} is fulfilled for $t=0$ and by continuity, the lemma is proved for $t < t_*$ with a positive time $t_*$. For $t = t_*$, one gets $m_{3/2}(t_*) = \kappa ( 1+\mu_0 t_* )^{-3}$, and the last differential inequality reads
				\begin{align*}
					\frac{d}{dt} \left ( m_{3/2}(t_*) -\frac{\kappa}{ \left(1+\mu_0  \,t_* \right)^{3}}\right ) \leq 
					 & \left( - (1 - \gamma) \, \kappa^{4/3} + C_1 \kappa + \frac{3}{2} \, \gamma \, M^2 \right) \frac{1}{\left(1+ \mu_0 \, t_*\right)^4},
				\end{align*}
				where $C_1 = 3 \left (\mu_0 + \gamma \sqrt M/2 \right )$. If the constant $\kappa$ is chosen large enough, the right hand side of this inequality is negative, which concludes the proof.
			\end{proof}
			
			Thanks to this estimate, we can now show that the time derivative of $\mathcal{E} (g)$ is uniformly bounded.
			
			\begin{proposition} \label{propLimDer}
				If $g$ is solution to the  inelastic Boltzmann equation with drift term \eqref{drift}, then there exists two constructive constants $- \infty < M_0 < 0 < M_1 < +\infty$ such that
				\begin{equation*}
					M_0 \leq \frac{d}{ds} \mathcal{E} (g)(s) \leq M_1, \ \forall \, s \geq 0.
				\end{equation*}
				
			\end{proposition}
			
			\begin{proof}			
				Given that $g$ is solution to~\eqref{drift}, it is possible to define a new distribution $h$ by setting
				\begin{equation*}
					h(t,v) := V_0(t)^d g(T_0(t), V_0(t) \,v),
				\end{equation*}
				where we defined as in \cite{Mischler:20061}
				\begin{equation*}
					\left \{\begin{aligned}
						& V_0(t) = 1+\mu_0 \, t, \\
						& T_0(t) = \log \,(1+\mu_0 \, t).
					\end{aligned}\right.
				\end{equation*}
				Then $h$ is solution to the inelastic Boltzmann equation \eqref{boltzEq}.
				Thanks to the expression of this rescaling, one hase
				\begin{equation*}
					\mathcal{E}(g)\left (T_0(t)\right ) = V_0(t)^2 \mathcal{E}(h)(t).
				\end{equation*}
				Differentiating this relation with respect to time gives
				\begin{align*}
					\frac{d}{dt}\mathcal{E}(g)\left (T_0(t)\right ) = A(t) + B(t),
				\end{align*}
				with
				\begin{equation*}
					A(t) = 2 \, \mu_0 (1+ \mu_0 \, t) \, \mathcal{E} (h)(t), \  B(t) = (1+ \mu_0 \, t)^2 \frac{d}{dt} \mathcal{E}(h)(t).
				\end{equation*}
				Therefore, using the sub-critical Haff's Law \eqref{haffh}, if $t > 0$, 
				\begin{equation} \label{ineqAhyp}
					 \frac{2 \, \mu_0 \, m}{1+\mu_0 \, t} \leq A(t) \leq \frac{2 \, \mu_0 \, M }{1+\mu_0 \, t}.
				\end{equation}
				Moreover, by the weak expression~\eqref{Qweak} of $Q(h,h)$, equation \eqref{boltzEq}  and~\eqref{edoE1} with $a=0$, one has on the one hand
				\begin{align}
					B(t) & = (1+\mu_0 \, t)^2 \int_{ \mathbb{R}^d } Q(h,h) \, |v|^2 \, dv  \notag\\
					 & = - \beta_e\, (1+\mu_0 \, t)^2 \iint_{ \mathbb{R}^d \times \mathbb{R}^{d}} h h_* \, |v - v_*|^3 \, dv \, dv_* \leq 0. \label{ineqBneg}
				\end{align}
				On the other hand, Fubini Theorem and mass conservation yield
				\begin{equation*}
					\iint_{ \mathbb{R}^d \times \mathbb{R}^{d}} h h_* \, |v - v_*|^3 \, dv \, dv_* \leq 8 m_{3/2}(h).
				\end{equation*}
				Then, using inequality \eqref{inegMom2} of Lemma \ref{lemMom2} in~\eqref{ineqBneg} allows to show that $B$ satisfies
				\begin{equation} \label{ineqBhyp}
					-  \frac{ 8 \, \kappa \, \beta_e}{ 1+\mu_0 \, t } \leq B(t)  \leq 0.
				\end{equation}
				Therefore, we can gather inequalities~(\ref{ineqAhyp}, \ref{ineqBneg}, \ref{ineqBhyp}) to write
				\begin{equation} \label{ineqdtEgT}
					\frac{2(\mu_0 \, m - 4 \, \kappa \, \beta_e)}{1+\mu_0 \, t} \leq \frac{d}{dt}\mathcal{E}(g)\left (T_0(t)\right ) \leq \frac{2 \,\mu_0 \, M}{1+\mu_0 \, t}.
				\end{equation}
				Using the chain rule, we have
				\begin{align*}
					\frac{d}{dt}\mathcal{E} (g) \, \left (T_0(t)\right ) & = T_0'(t) \frac{d}{ds}\mathcal{E} (g)(s) = \frac{\mu_0 }{1+\mu_0 \, t} \frac{d}{ds}\mathcal{E} (g)(s)
				\end{align*}	
				and we can finally write according to~\eqref{ineqdtEgT},
				\begin{equation*}
					2 \, m - \frac{8 \, \kappa \, \beta_e}{\mu_0} \leq \frac{d}{ds}\mathcal{E}(g)(s) \leq 2 M. 
				\end{equation*}	
				This concludes the proof.
			
			\end{proof}
			
			\begin{remark}
				We notice in the proof that the lower bound $M_0$ is not necessarily a nonnegative constant, allowing the energy of $g$ to have some inflection points.
			\end{remark}
			
			We are now ready to prove Theorem \ref{thmCrit}.
			
		\subsection{Anomalous Cooling Process} \label{subUnifProofCool}
		
			We will prove in this Subsection the cooling process of an anomalous gas for a nonnegative constant $a$ in the collision kernel \eqref{ba}, that is points \ref{thmCoolingInf},  \ref{thmCoolingExp} and \ref{thmCoolingFin} of Theorem \ref{thmCrit}. Both upper and lower bounds for the energy of $f$ will be obtained together, thanks to the results of Subsection \ref{subLemma}.
			
			For the sake of simplicity, let us denote for $t < T_c$
			\begin{equation*}
				E(t) := \mathcal{E} (f)(t) \text{ and } \bar E(t) := \mathcal{E} ( g)\left (T(t) \right),
			\end{equation*}
			and set $\tau = 1$. 
			We have already seen in~\eqref{eqVEfg} that $V(t)= \bar E(t)^{1/2} E(t)^{-1/2}$.
			Then differentiating this expression with respect to time yields
			\begin{align}
				E(t)^{-a} & = V'(t) \notag\\
				 & = \frac{\bar E'(t)}{2 (E(t) \, \bar E(t) )^{1/2}} - \frac{\bar E(t)^{1/2} \, E'(t)}{2 \, E(t)^{3/2}}. \label{anomEqtmp1}
			\end{align}
			Thanks to the expression of $\bar E$ and a chain rule, one has on the one hand
			\begin{align}
				\bar E'(t) & = \frac{d}{dt}\left ( \int_{\mathbb{R}^d}g(T(t),w) \,|w|^2 \, dw \right ) \notag \\
				 & = T'(t) \, \chi(t), \label{anomEqtmp2}
			\end{align}
			where we defined
			\begin{equation*}
				\chi := \frac{d}{ds} \left (\mathcal E(g) \right)\circ T.
			\end{equation*}
			On the other hand, the time derivative of $T$ is given by
			\begin{equation}
				T'(t) = \frac{V'(t)}{V(t)} = \frac{E(t)^{-a+1/2}}{\bar E(t)^{1/2}}.\label{anomEqtmp3}
			\end{equation}
			Finally, gathering relations \eqref{anomEqtmp1}, \eqref{anomEqtmp2}, \eqref{anomEqtmp3} and dividing by $E(t)^{-a}$, it comes that
			\begin{equation} \label{anomEq}
				\frac{\chi(t)}{\bar E(t)} -\bar E(t)^{1/2} \, E'(t) \, E(t)^{a-3/2} = 2.
			\end{equation}
			Thanks to the result of Proposition \ref{propLimDer}, there exists two finite constants $M_0 \, < 0 < M_1$ such that $M_0 \leq \chi(t) \leq M_1$.
			Thus, using the uniform bounds \eqref{haffg} of $\bar E$ and the nonpositivity of $M_0$, one has
			\begin{equation*}
				\frac{M_0}{c_0} \leq \frac{\chi(t)}{\bar E(t)} \leq \frac{M_1}{c_0}.
			\end{equation*}
			Therefore, gathering equality \eqref{anomEq}, bounds \eqref{haffg} and inequality~\eqref{ineqE} imply that
			\begin{equation} \label{inegDiffCool}
				-K := c_0^{-1/2}\left (\frac{M_0}{c_0} -2 \right )\leq E'(t) \, E(t)^{a-3/2} \leq - \beta_e <0,
			\end{equation}
			where $\beta_e$ is given by \eqref{eqDu} and $K > 0$ because we took $M_0 < 0 < 2 \, c_1$.
			Then, if $0 \leq a < 1/2$, the time integration of the two sides of inequality \eqref{inegDiffCool} between $0$ and $t$ and the fact that $E(0) = 1$  yield
			\begin{equation*}
				\frac{1}{(C_1 \, t + 1)^{-2 \, \alpha}} \leq E(t) \leq \frac{1}{(C_2 \, t + 1)^{-2 \, \alpha}},
			\end{equation*}
			with $\alpha = 1/(2 \, a - 1) < 0$, $C_1 = -2 \, \alpha \, K > 0$ and $C_2 = -2 \, \alpha \, \beta_e > 0$. 
			This is the point \ref{thmCoolingInf} of Theorem \ref{thmCrit}. 
			Moreover, if $a=1/2$, the same argument gives the point \ref{thmCoolingExp}, namely
			\begin{equation*}
				\exp(-K t) \leq E(t) \leq \exp(-\beta_e \, t).
			\end{equation*}
			Finally, taking $a >1/2$ gives the point \ref{thmCoolingFin} with $C_3 = 2 \, \alpha \, K > 0$ and $C_4 = 2 \, \alpha \, \beta_e > 0$:
			\begin{equation*}
				(1-C_3 \, t)^{2 \, \alpha} \leq E(t) \leq (1-C_4 \, t)^{2 \, \alpha}.
			\end{equation*}

			Let us now show the result concerning self-similar profiles of equation \eqref{homo}.
			
	\section{Proof of Theorem \ref{thmAttract}}
		\label{secSelfSimUniqu}
		
		For $a \geq 0$, we will study in this Section the uniqueness of self-similar profiles of the inelastic Boltzmann equation with drift \eqref{drift}. 
		The cooling process found in the previous Subsection will allow us to state a result concerning convergence of solutions to \eqref{homo} towards self-similar solutions.
		
		We shall use a Theorem concerning the convergence toward self-similar profiles for small inelasticity in the scaled granular gases equation \eqref{drift}, which has been shown in \cite{mischler:20091} (we recalled it for reader convenience in the Appendix, Theorem \ref{thmMiMoAttract}). 
		For this, let us set
		\begin{equation*}
			\tau = \tau_e := 1-e, 
		\end{equation*}
		in order to balance the dissipation of kinetic energy by the drift. 
		Thanks to this scaling, we have uniqueness and attractiveness of the self-similar profiles of equation \eqref{drift}.

		Let us show thanks to this Theorem the trend to self-similar solution of our problem.
		We have already seen in Subsection \ref{subNLScal} that if $g$ is solution to \eqref{drift} then $f$ is solution to \eqref{homo} with $f(t,v) = V(t)^d \, g(T(t), V(t) \, v)$, where
		\begin{equation} \label{selfSimVar}
			\left\{ \begin{aligned}
				V(t) & = \left( \frac{\mathcal{E} ( g)(T(t))}{\mathcal{E} ( f)(t)} \right)^{1/2},\\
				T(t) & = \frac{\log \,(V(t))}{\tau_e}.
			\end{aligned}\right.
		\end{equation}
		Thus, if $G_e$ is the unique self-similar profile of equation \eqref{drift}, one can find a self-similar solution to equation \eqref{homo} by setting $F_e(t,v) = V(t)^{d} \, G_e(V(t) \, v)$. The uniqueness of this solution up to a translation of time can be shown as in \cite{mischler:20091} to prove the first point of Theorem \ref{thmAttract}. 
		
		Moreover, the transformation $w \to V(t) \, v$ and the rate of convergence toward equilibrium \eqref{attractProf} give
		\begin{align}
			\| f(t, \cdot) - F_e(t, \cdot) \|_{L^1} & = V(t)^d \int_{\mathbb{R}^d} \left |  g(T(t),V(t) \, v) - G_e(V(t) \, v) \right | \, dv \notag\\
			 & \leq \left \| g(T(t),\cdot) - G_e \right \|_{L^1_2} \notag\\
			 & \leq e^{-(1-\eta) \, \nu_e \, T(t)}. \label{ineqAttract}
		\end{align}
		Besides, thanks to the expression \eqref{selfSimVar} of the self-similar variables and the choice $\tau_e = 1-e$, one has
		\begin{equation*}
			\nu_e \, T(t) = \log \,(V(t)) + \mathcal O(1-e). 
		\end{equation*}
		The positive lower bound of $\mathcal{E} \left( g_{T(t)} \right)$ in Theorem \ref{thmMiMoE} together with inequality \eqref{ineqAttract} and relation \eqref{selfSimVar} yield
		\begin{equation*}
			\| f(t, \cdot) - F_e(t, \cdot) \|_{L^1} \leq C \mathcal{E}(f)(t)^{\mu_e/2},
		\end{equation*}
		where $\mu_e = 1-\eta + \mathcal O(1-e)$ and $C$ is a nonnegative constant. Finally, using the cooling process of Theorem \ref{thmCrit} that we have shown in last Subsection, we can conclude the proof of Theorem \ref{thmAttract}, that is the trend to self-similar solution of the solutions to \eqref{homo} and the rate of convergence depending on the (weak) inelasticity $1-e$ and the coefficient $a$ of the energy dependent collision kernel \eqref{ba}.
		
	\section*{Summary and Perspectives}
	
		We have given in this paper the asymptotic behaviour of the space-homogeneous inelastic Boltzmann equation for anomalous gases. Depending on a parameter, we can observe in this model a blow up in finite time. We quantified the time of blow up and gave the associated self-similar profiles.
		Under a weak inelasticity hypothesis, we also obtained the uniqueness (up to a translation of the time) of the self-similar solutions to this equation, and the convergence of the classical solutions toward them.
		
		To prove these theorems, we introduced a new energy-dependant self-similar scaling, which leads to the study of the inelastic Boltzmann equation with a linear drift term. We gave some results concerning the asymptotic behaviour of the energy of the solutions to this equation, by using some well known theorems about this equation.
		
		Concerning the perspectives of this work, we would like to adapt our results to nonconstant restitution coefficient models, such as the viscoelastic one. We also want to prove the rate of cooling using a more classical self-similar scaling, generalising the one used in \cite{ernst:2002} and \cite{Mischler:20062}, and moments methods. That would perhaps allow to give up the rather unphysical   $L^p$ hypothesis on the initial condition for the proof of Haff's Law.
								
	\section*{Acknowledgement}
		I would like to thanks Francis Filbet and Cl\'ement Mouhot for fruitful discussions and comments on this article and also for their careful reading. 
		I also like to thanks Bertrand Lods for his encouragements, careful reading, and for pointing out a mistake in a proof in a preliminary version of the work.
		Finally, I want to thanks the anonymous referees for their very interesting remarks about the manuscript.

	\appendix
	
	\section{Some Results about the Granular Gases Equation}
	
		In this appendix we will present some important results concerning the inelastic Boltzmann equation, with or without drift term, that we used in this paper.
		
		Let us start by the resolution of the Cauchy problem (\ref{homo}-\ref{CI}) for an anomalous gas. 

		\begin{theorem}[\cite{Mischler:20061}, Theorem 1.4]  \label{thmCauchyPb}
			Let $f_{in}$ be a nonnegative distribution satisfying
			\begin{align*}
				f_{in} \in L^1_3  \cap \mathcal G \text{ and } & f_{in} \in BV_4 \cap L_5^1.
			\end{align*}
			Then, the following results hold for a cross section given by \ref{ba} and \ref{HS}:
			\begin{enumerate}[label=(\roman{*})]
				\item the cooling time $T_c$ is well define and positive; 
				\item for each $T \in ]0, T_c[$, there exists a unique solution $f \in \mathcal C(0,T;L_2^1) \cap L^\infty (0,T; L_3^1)$ to the initial value problem \eqref{homo}-\eqref{CI}. Such a solution is nonnegative, mass and momentum conservative, and kinetic energy dissipative;
				\item the energy of $f$ is subject to the following asymptotic behaviour
				\begin{equation} \label{asymptConv}
					\mathcal{E}(f)(t) \rightarrow 0 \text{ and } f(t,\cdot) \rightharpoonup \delta_{v=0} \text{ when } t \rightarrow T_c,
				\end{equation}
				where the convergence of $f$ occurs for weak-* topology of $\mathcal{M}^1(\mathbb{R}^d)$.
			\end{enumerate}
		\end{theorem}
		
		We then give a result stating boundedness of solutions to the granular gases equation with a drift term, together with the existence of steady-states for this equation.

		\begin{theorem}[\cite{Mischler:20062}, Theorems 1.1 and 1.3] \label{thmMiMoE}
			Let $g_{in} \in \mathcal G \cap L^p$ for a fixed $p > 1$ be an initial datum for \eqref{drift} with constant restitution coefficient $e$. If $g$  is solution to the associated Cauchy problem, then
			\begin{equation} \label{haffg}
				0 < c_0 \leq \mathcal{E}( g)(s) \leq c_1 < \infty, \, \forall s \geq 0.
			\end{equation}
			Moreover, there exists a \emph{self-similar profile} $0 \leq G \in L_2^1 \cap \mathcal G$:
			\begin{equation*}
				 \tau \, \nabla_w \cdot (w G ) - Q_e (G ,G ) = 0.
			\end{equation*}
		\end{theorem}
		
		Then, this result was extended, proving uniqueness and attractiveness of the steady-states, under a weak inelasticity assumption.
		
		\begin{theorem}[\cite{mischler:20091}, Theorem 1.1 \emph{(i)} and \emph{(iv)}]\label{thmMiMoAttract}
			There exists a constructive $e_* \in (0,1)$ such that for all $e \in [e_*, 1)$, the self-similar profile $G_e$ from Theorem \ref{thmMiMoE} is unique and globally attractive on bounded subsets of $L^1_3$: for any $M >0$, there exists $\eta \in (0,1)$ such that if
			\begin{align*}
				g_{in} \in L^1_3 \cap\mathcal G, && \| g_{in}\|_{L^1_3} \leq M_0,
			\end{align*}
			then the solution $g$ to equation \eqref{drift} satisfies
			\begin{equation} \label{attractProf}
				\left \| g(t, \cdot) - G_e \right \|_{L^1_2} \leq e^{-(1-\eta) \nu_e t},
			\end{equation}
			where $\nu_e = \tau_e+ \mathcal{O}\left (\tau_e^2\right )$.
		\end{theorem}
				
		The last important result concerns estimates of the moments of the granular collision operator $Q_e$. 
		For a nonnegative distribution $f$ and $p \geq 0$, we will define the $2p^{th}$ moment of $f$ and $Q_e(f,f)$ by
			\begin{align*}
				m_p := \int_{\mathbb{R}^d} f(v) \, |v|^{2p} \, dv, && \mathcal Q_p := \int_{\mathbb{R}^d} Q_e(f,f)(v) \, |v|^{2p} \, dv.
			\end{align*}
		This Theorem was first proved for constant cross section and restitution coefficient in \cite{bobylev:2004} and then extended to the more general case of \emph{Grad's cut-off} assumption kernels with variable restitution coefficient in~\cite{Alonso:2009vacuum}. 
		It states that:
		
		\begin{theorem}[\cite{bobylev:2004}, Lemma 3 and~\cite{Alonso:2009vacuum}, Proposition 2.6]\label{thmBGPmoments}
			For any $p \geq 1$, there exists an explicit constant $\gamma = \gamma \,(p, b) \in (0,1)$ such that
			\begin{equation*}
				\mathcal Q_p \leq -(1-\gamma) m_{p+1/2} + \gamma S_p,
			\end{equation*}
			where $S_p$ is given by
			\begin{equation*}
				S_p = \sum_{k=1}^{\left[\frac{p+1}{2}\right]} \begin{pmatrix} p \\ k \end{pmatrix}(m_{k+1/2} \, m_{p-k} + m_k \, m_{p-k+1/2}),
			\end{equation*}
			and the binomial coefficients for noninteger $p \geq 0$ have been defined by
			\begin{align*}
				\begin{pmatrix} p \\ k \end{pmatrix} = \frac{p \,(p-1)\cdots(p-k+1)}{k!}, \ k \geq 1, && \begin{pmatrix} p \\ 0 \end{pmatrix} = 1.
			\end{align*}
		\end{theorem}

	\bibliographystyle{siam}
	\bibliography{/home/Tom/Documents/These/includes/biblio}
	
\end{document}